\documentclass [final,review,12pt]{elsarticle}
\usepackage{amssymb}
\usepackage{amsthm}
\theoremstyle{definition}
\newtheorem{definition}{\bfseries Definition}[section] 
\newtheorem{theorem}{\bfseries Theorem}[section]

\newtheorem{remark}{\bfseries Remark}[section]
\usepackage{multirow,booktabs}
\usepackage{caption}
\usepackage{graphicx}
\usepackage{amsfonts}
\usepackage{lineno,hyperref}
\usepackage{amsmath,amssymb}
\usepackage{amsmath}
\usepackage{graphicx}
\usepackage{float}
\usepackage[export]{adjustbox}
\usepackage{subfigure}
\usepackage{geometry}
\usepackage{epstopdf}
\usepackage{caption}
\usepackage{booktabs}
\usepackage{bigstrut,multirow,rotating}
\usepackage{xcolor}  
\usepackage{tikz}  
\usepackage{diagbox}
\usepackage{multirow}
\usepackage{array}
\usepackage{longtable}
\usepackage{graphicx}
\usepackage{algpseudocode}
\usepackage{algorithmicx, algorithm}

\usepackage{newtxmath}

\usepackage{enumitem}

\makeatletter
\def\ps@pprintTitle{%
    \let\@oddhead\@empty
    \let\@evenhead\@empty
    \def\@oddfoot{}
    \let\@evenfoot\@oddfoot
}
\makeatother

\begin{document}


\title{New Bounds for Limited Zarankiewicz Numbers from $K_{5t}$ Blocks}


\author[label1]{Hanxin Liu}
\author[label1]{Yisheng Song\corref{mycorrespondingauthor}}\cortext[mycorrespondingauthor]{Corresponding author.  E-mail address: yisheng.song@cqnu.edu.cn (Yisheng Song).}

 \affiliation[label1]{organization={School of Mathematical Sciences, Chongqing Normal University},
             city={Chongqing},
             postcode={401331},
             country={P.R. China}\\ { Email: hbmzlhx@163.com(Liu);  yisheng.song@cqnu.edu.cn(Song)}}

\begin{abstract}

The restricted augmented Zarankiewicz number \(z_L(m,n)\) yields core combinatorial lower bounds for the maximal SOS rank of biquadratic forms. All previously known infinite admissible graph families rely on \(K_{4t}\) incidence graphs, attaining an asymptotic relative gap limit of \(1/4\). This work develops a new infinite family built from \(K_{5t}\) incidence bipartite graphs with \(\mathbb{Z}_5\) cyclic labeling for block partitions. We construct valid nondegenerate intra-block and inter-block 2-edges, derive a quadratic closed-form lower bound of \(z_L\), and prove its relative gap converges asymptotically to \(2/5\). Full enumeration for \(t=1\) verifies the exact value \(z_L(10,5)=23\). Under nondegenerate and generalized \(C_4\)-free constraints, the ratio \(2/5\) is shown to be the maximal asymptotic ratio attainable under this block framework. Our results expand the library of extremal bipartite graphs and sharpen lower bounds for biquadratic SOS rank, with further open problems for general \(K_{kt}\) constructions outlined in closing.
\end{abstract}

\begin{keyword}
limited augmented Zarankiewicz number; incidence bipartite graph; generalized \(C_4\); sum-of-squares rank; extremal graph construction

\end{keyword}
\maketitle

\section{Introduction}
Verifying the global nonnegativity of multivariate polynomials constitutes a core fundamental problem across pure mathematical theory and computational algorithm design. Directly testing polynomial nonnegativity is proven to be NP-hard, which renders brute-force exhaustive search computationally infeasible. To circumvent this computational barrier, the sum-of-squares (SOS) rank framework has been established as a computationally tractable yet powerful surrogate tool \cite{ref1}.

The SOS decomposition technique has found extensive applications spanning semidefinite programming \cite{ref2}, algebraic polynomial theory \cite{ref3}, statistical modeling \cite{ref4}, and quantum information science \cite{ref5}. Mature numerical algorithms exist to both certify whether a given polynomial admits an SOS representation and explicitly compute its concrete decomposition form \cite{ref6}. Of particular relevance to our work is the intimate connection between SOS decompositions and biquadratic tensors. The algebraic structure intrinsic to biquadratic tensors naturally encodes SOS constraints, laying rigorous theoretical groundwork for applied research disciplines including solid mechanics \cite{ref7,ref8}, optical physics \cite{ref9}, and signal/image processing \cite{ref10}.

 Given a biquadratic tensor $\mathcal{A}\in BQ(m,n),$ the $m \times n$ biquadratic form is
\[P(\mathbf{x},\mathbf{y})=\sum_{i,k=1}^m\sum_{j,l=1}^na_{ijkl}x_ix_ky_jy_l,\]
where $\mathbf{x}=(x_{1},\ldots,x_{m})$, $\mathbf{y}=(y_1,\ldots,y_n).$ if $P$ can be
written as a sum of squares of bilinear forms
\[P(\mathbf{x},\mathbf{y})=\sum_{p=1}^rf_p(\mathbf{x},\mathbf{y})^2,\]
then the smallest such $r$ is the SOS rank of $P$, in other words, the SOS rank of a biquadratic form, defined as the minimal number of bilinear squares needed to express the form, recorded as $BSR(m,n)$\cite{ref11}. The SOS rank of a binary quadratic form characterizes the internal complexity of a polynomial. It is known that $BSR(3,3)=6$\cite{ref12}, $BSR(m,2)=m+1$ and $BSR(2,n)=n+1$\cite{ref13}. For more general cases, we have no idea.  Therefore, in order to determine the minimum complexity, a method is needed to measure the lower bound of the $BSR$.

Recently, some studies have linked the Zarankiewicz number to the $BSR$ and obtained the following relationship\cite{ref14}
\begin{equation}\label{eq1}
BSR(m,n)\geq z(m,n),
\end{equation}
where $ z(m,n)$ is the Zarankiewicz number\cite{ref15}. The classical Zarankiewicz number \(z(m,n)\) counts the maximum number of edges in an \(m\times n\) bipartite graph free of \(C_4=K_{2,2}\) subgraphs. By constructing the associated graph of the complete graph $K_{q+1}$: The left vertices represent the edges of $K_{q+1}$, and the right vertices represent its vertices. If the right vertex is an endpoint of the left vertex, then the two vertices are connected. This graph is $C_4$-free and obtain a more general Zarankiewicz number $z\left( \begin{pmatrix} {q+1} \\ {2} \end{pmatrix},q+1\right)=q(q+1)$\cite{ref16,ref17}. To obtain tighter bounds for maximal SOS rank, Qi, Cui and Xu\cite{ref18} introduced the limited augmented Zarankiewicz number \(z_L(m,n)\) via augmented bipartite graphs equipped with 1-edges and admissible nondegenerate 2-edges, subject to generalized \(C_4\)-free constraints. The core inequality chain
\begin{equation}\label{eq2}
BSR(m,n)\ge z_L(m,n)\ge z(m,n)
\end{equation}
establishes \(z_L\) as an improved combinatorial lower bound for $BSR(m,n)$. And it also proved that \(z_L(5,3)=9\), \(z_L(5,4)=12\), \(z_L(6,4)=14\). Subsequently, Qi Cui and Xu\cite{ref19} constructed a family of infinite graphs based on $K_4$ into four-vertex cyclic \(\mathbb Z_4\) subblocks, yielding an asymptotic relative gap \(\lim_{t\to\infty}\frac{z_L-z}{z}=1/4\). Obtain a group of lower beings
\[BSR\left(
\begin{pmatrix}
4t \\
2
\end{pmatrix},4t\right)\geq
z_L\left(
\begin{pmatrix}
4t \\
2
\end{pmatrix},4t\right)\geq
2\left(
\begin{array}
{c}4t \\
2
\end{array}\right)+4t^2-2t=20t^2-6t.\]

In this paper, we construct a new infinite family from \(K_{5t}\), partitioning the vertex set into disjoint five-vertex blocks labeled by cyclic group \(\mathbb Z_5\). We design admissible within-block and cross-block 2-edges, derive an explicit quadratic lower bound for \(z_L\big(\binom{5t}{2},5t\big)\), and prove its asymptotic gap converges to \(2/5\), which can be compared with the \(1/4\) constant of the \(K_{4t}\) family.

The rest of this paper is organized as follows. Section  \ref{2} recalls preliminary definitions of augmented bipartite graphs and generalized \(C_4\)-free conditions. Section \ref{3} states and proves our main quadratic bound theorem for the \(K_{5t}\) family. Section \ref{4} revisits exact \(z_L\) values for small parameters. Section \ref{5} concludes the paper and lists open research directions. Appendices provide a reproducible exhaustive enumeration algorithm to check admissability of any set of 2-edges.

\section{Preliminaries}\label{2}
This section reviews the existing results and definitions. A $C_4$-free $m\times n$ bipartite graph $G_1=(S,T,E_1)$ with $S=[m],T=[n].$ A limited augmented bipartite graph $G = (S, T, E)$ augmented from $G_1$ has edge set $E=E_1\cup E_2$, where $|E_1|=z(m,n)$ is the set of 1-edges and  $E_2$ is the set of 2-edges. A 1-edge Denoted as $(i, j)$ with $i\in S,j\in T$.  A 2-edge Denoted as $(i, j;k,l)$ with $i,k\in S, j,l\in T$. And there are three types of 2-edge: Nondegenerate ($i\neq k$ and $j\neq l$); Row-degenerate ($i = k$ and $j\neq l$); Column-degenerate($i \neq k$ and $j = l$). 2-edge satisfy the simplicity condition:

(S) No 2-edge shares a cell $(i, j)$ with any 1-edge or with another 2-edge.

For the above graph $G$, we associate a biquadratic polynomial

\[P_G(\mathbf{x},\mathbf{y})=\sum_{(i,j)\in E_1}x_i^2y_j^2+\sum_{(i,j;k,l)\in E_2}(x_iy_j+x_ky_l)^2.\]

We call $P_G$ a doubly simple biquadratic form. Note that the absence of a generalized $C_4$-cycle is a sufficient condition for the doubly simple biquadratic form to be irreducible, If the SOS expression of $P_G$ is irreducible, then $sos(PG) = |E| = |E1| + |E2|$.

\begin{theorem}\cite{ref18}
For all $m,n\geq2,$ we have 
\[\mathrm{BSR}(m,n)\geq z_L(m,n)\geq z(m,n).\]
\end{theorem}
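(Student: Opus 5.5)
The inequality splits into two parts, $z_L(m,n)\ge z(m,n)$ and $\mathrm{BSR}(m,n)\ge z_L(m,n)$, and I would prove them separately. Here $z_L(m,n)$ is understood as the maximum of $|E|=|E_1|+|E_2|$ over all limited augmented bipartite graphs $G=(S,T,E_1\cup E_2)$. In these graphs $G_1=(S,T,E_1)$ is a $C_4$-free graph with $|E_1|=z(m,n)$, and $E_2$ consists of 2-edges satisfying the simplicity condition (S) such that $G$ has no generalized $C_4$.

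\textbf{Lower inequality.} Take an extremal $C_4$-free graph $G_1$ with $z(m,n)$ edges and set $E_2=\emptyset$. Condition (S) holds vacuously. With no 2-edges present, a generalized $C_4$ can only be an ordinary $C_4$ among 1-edges, and $G_1$ has none. So this $G$ is admissible, and $z_L(m,n)\ge |E_1|=z(m,n)$.

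\textbf{Upper inequality.} Let $G$ be an admissible graph attaining $z_L(m,n)$, and consider the doubly simple form $P_G$. It is a biquadratic form in $m\times n$ variables and is written as a sum of $|E|$ squares of bilinear forms. The key fact is the one recalled just before the theorem: the absence of generalized $C_4$ cycles forces the SOS representation to be irreducible, so $\operatorname{sos}(P_G)=|E|$. Since $\mathrm{BSR}(m,n)$ is the maximum SOS rank over SOS biquadratic forms of size $m\times n$, we get $\mathrm{BSR}(m,n)\ge \operatorname{sos}(P_G)=|E|=z_L(m,n)$.

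\textbf{Main obstacle.} The substantive step is justifying $\operatorname{sos}(P_G)=|E|$. If I must prove this rather than cite it, I would argue as follows.
\begin{enumerate}
\item Suppose $P_G=\sum_{p=1}^r f_p^2$ with $r<|E|$.
\item Every monomial $x_i^2y_j^2$ of $P_G$ comes from exactly one cell, by condition (S). Using the standard Newton-polytope and support argument, each $f_p$ must then be supported on the cells $x_iy_j$ occurring in $E$.
\item Compare coefficients of the mixed monomials $x_ix_ky_jy_l$. These equations force the Gram matrix to be block-diagonal, with one block for each 1-edge and each 2-edge.
\item Any nonzero off-block coupling would require a cross term $x_ix_ky_jy_l$ produced by two different edges. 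Such a coincidence of cells is exactly what a generalized $C_4$ encodes, and $G$ has none.
\item Each block is a rank-one positive semidefinite matrix: a single square $x_i^2y_j^2$ for a 1-edge, or $(x_iy_j+x_ky_l)^2$ for a 2-edge.
\item Any positive semidefinite Gram matrix representing $P_G$ therefore has rank exactly $|E|$, so no representation with $r<|E|$ squares exists, contradicting the assumption.
\end{enumerate}
Since the paper attributes the theorem to \cite{ref18}, I would invoke this irreducibility result from there rather than redo the coefficient bookkeeping.
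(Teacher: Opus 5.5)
Your two-part argument is correct and follows the same route as the paper. The paper gives no proof of its own and simply cites \cite{ref18}, relying on $E_2=\emptyset$ for $z_L\ge z$, and on the fact recalled just before the theorem that a generalized-$C_4$-free $G$ has $\operatorname{sos}(P_G)=|E|$, for $\mathrm{BSR}(m,n)\ge z_L(m,n)$.

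One warning about your optional self-contained sketch: steps 3--4 are wrong as stated. Generalized-$C_4$-freeness does not rule out rectangles of four occupied cells. In the incidence-graph constructions used here, every 2-edge half $(ab,v)$ completes such a rectangle with the 1-edges $(ab,a)$, $(av,a)$, $(av,v)$. The monomial $x_{ab}x_{av}y_ay_v$ then has coefficient $0$ in $P_G$, and coefficient matching only gives $Q_{(ab,a),(av,v)}+Q_{(ab,v),(av,a)}=0$. So the linear equations leave a free parameter and do not by themselves make the Gram matrix block-diagonal.

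The missing ingredient is positive semidefiniteness. Let $c,c'$ be the two halves of a 2-edge. Then $Q_{cc}=Q_{c'c'}=1$, and $Q_{cc'}=1$ (using Condition 2 when the 2-edge is nondegenerate). With $Q\succeq 0$ this forces rows $c$ and $c'$ of $Q$ to coincide. Hence $Q_{(ab,v),(av,a)}=Q_{(p,q),(av,a)}$, where $(p,q)$ is the other half. Condition 3, applied to that 2-edge and the cell $(av,a)$, makes the rectangle through $(p,q)$ and $(av,a)$ incomplete, which pins this entry to $0$. Only after this combined PSD and Condition 3 step is the Gram matrix block-diagonal of rank $|E|$.
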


\begin{definition}(Limited Augmented Zarankiewicz Number \cite{ref18})\label{D1}
Let $G=(S,T,E_1\cup E_2)$ be an $m\times n$ limited augmented bipartite graph augmented from a $C_4$-free graph $G_1=(S,T,E_1)$ with $|E_1|=z(m,n).$

A cell $(i,j)$ is occupied if $(i,j)\in E_1$ or $(i,j)$ is a half of some 2-edge in $E_2$. We say G   contains a generalized $C_4$-cycle if any of the following holds:
\begin{itemize}
\item (Condition 1) There exists a classical $C_4$-cycle formed by 1-edges.

\item (Condition 2) There exists a nondegenerate 2-edge $(i,j;k,l) \in E_2$ such that both opposite cells $(i,l)$ and $(k,j)$ are occupied.

\item (Condition 3) There exist a 2-edge $(i,j;p,q)$ (of any type) and a distinct cell $(k,l)$ with $k \notin \{i,p\}$ and $l \notin \{j,q\}$ such that the five cells
\[
(k,l),\ (k,j),\ (k,q),\ (i,l),\ (p,l)
\]
are all occupied. Moreover, if the 2-edge is nondegenerate, these five cells must be pairwise distinct.
\end{itemize}

The limited augmented Zarankiewicz number $z_L(m,n)$ is the maximum possible total
number of edges $|E_1|+|E_2|$ for which a generalized $C_4$-cycle does not exist.
\end{definition}

\begin{remark}\cite{ref19}
The Condition 2 in Definition \ref{D1} can be replaced by the following weaker condition

(Weaker Condition 2) There exists a nondegenerate 2-edge $(i, j; k, l) \in E2$ such that the
complementary 2-edge $(i, l; k, j)$ is also in $E_2$.

\end{remark}

Although the weak condition 2 plays a crucial role in subsequent research, it is still the original condition 2 that is used in the proof process of this article. The following theorem describes the situation of $K_{4t}$, which provides an enlightening influence on the formulation of the main theorem in this paper.

\begin{theorem}(Quadratic $t$-Bound for $K_{4t}$\cite{ref19})\label{T1}
For every integer $t\geq1$, let $m=\begin{pmatrix}4t \\2\end{pmatrix}$ and $n=4t$. Then 
\begin{equation}
z_L(m,n)\geq2
\begin{pmatrix}
4t \\
2
\end{pmatrix}+4t^2-2t.
\end{equation}
Consequently,
\begin{equation}
BSR(m,n)\geq2
\begin{pmatrix}
4t \\
2
\end{pmatrix}+4t^2-2t.
\end{equation}
For large $t$,
\begin{equation}
\frac{z_L-z}{z}\geq\frac{4t^2-2t}{16t^2-4t}=\frac{2t-1}{8t-2}\longrightarrow\frac{1}{4}.
\end{equation}

\end{theorem}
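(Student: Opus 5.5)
The plan is to prove the theorem by an explicit construction. We exhibit a limited augmented bipartite graph $G=(S,T,E_1\cup E_2)$ with $m=\binom{4t}{2}$ and $n=4t$. We take $|E_1|=z(m,n)=2\binom{4t}{2}$ and $|E_2|=4t^2-2t$, and check that $G$ contains no generalized $C_4$-cycle in the sense of Definition \ref{D1}. The bound on $\mathrm{BSR}$ then follows from the inequality chain (\ref{eq2}). The asymptotic statement is arithmetic: $z=2\binom{4t}{2}=16t^2-4t$, so
\[
\frac{z_L-z}{z}\ge \frac{4t^2-2t}{16t^2-4t}=\frac{2t-1}{8t-2}\to\frac14 .
\]

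\textbf{The base graph $G_1$.} We use the incidence graph of $K_{4t}$ recalled in the introduction. The rows $S$ are the edges $\{a,b\}$ of $K_{4t}$, the columns $T$ are its vertices, and $(\{a,b\},v)\in E_1$ iff $v\in\{a,b\}$. Two distinct edges of $K_{4t}$ share at most one vertex, so $G_1$ is $C_4$-free, and $|E_1|=2\binom{4t}{2}=z(m,n)$. This settles Condition 1.

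\textbf{The 2-edges.} Partition the vertex set into $t$ blocks $B_1,\dots,B_t$ of four vertices each, and label each block by $\mathbb{Z}_4$, say $B_s=\{v_{s,0},v_{s,1},v_{s,2},v_{s,3}\}$. All 2-edges will be nondegenerate, of the form $(e,v;e',v')$ with $v\notin e$, $v'\notin e'$, $e\ne e'$ and $v\ne v'$, so that both halves are unoccupied cells. The count $4t^2-2t=2t+4\binom{t}{2}$ suggests the following split. Each block $B_s$ receives $2$ intra-block 2-edges, built cyclically from the labels, for example pairing the cell $(\{v_{s,0},v_{s,1}\},v_{s,2})$ with $(\{v_{s,2},v_{s,3}\},v_{s,0})$, and similarly with the labels shifted by $1$. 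Each unordered pair of blocks $\{B_s,B_r\}$ receives $4$ inter-block 2-edges. These use a cross edge $\{v_{s,i},v_{r,j}\}$ together with a column from the other block, indexed by the $\mathbb{Z}_4$ labels as $j=i+c$ for a fixed shift. The exact choice is fixed so that all used cells are pairwise distinct, which gives condition (S).

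\textbf{Verification.} Condition 2 for $(e,v;e',v')$ asks that the cells $(e,v')$ and $(e',v)$ are not both occupied. We arrange $v'\notin e$ (or $v\notin e'$), and we also ensure that no other 2-edge uses those cells; this is a finite check on the cyclic pattern. Condition 3 is the main obstacle. For a 2-edge $(i,j;p,q)$ we must rule out a cell $(k,l)$ with $k\notin\{i,p\}$ and $l\notin\{j,q\}$ such that $(k,l),(k,j),(k,q),(i,l),(p,l)$ are all occupied.
\begin{itemize}
\item If $(k,j)$ and $(k,q)$ are both 1-edges, then $k=\{j,q\}$, so row $k$ is forced. Column $l$ must then be occupied in rows $i$, $p$ and $k$. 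Since each column $l$ carries 1-edges exactly in the rows containing $l$, the requirement becomes that $l$ lies in $i$, $p$ and $\{j,q\}$, or that these cells are covered by 2-edge halves.
\item The remaining cases involve 2-edge halves in some of the five cells. For these the sparse, cyclic placement (at most a bounded number of 2-edge halves per row and per column, all localized to one or two blocks) should reduce the check to finitely many local configurations. These configurations depend only on the $\mathbb{Z}_4$ labels and on whether the blocks are equal, so they can be checked once by hand or by the enumeration in the appendix.
\end{itemize}
Doing this case analysis cleanly is where I expect the effort to lie. I would first choose the shifts so that the two columns $j,q$ of any 2-edge are never both endpoints of a row $k$ that also meets $l$, and then tune the pattern if some local configuration fails.
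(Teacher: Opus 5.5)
\textbf{The proposal has genuine gaps.} Your overall strategy is the right kind: the incidence graph of $K_{4t}$ as the 1-edge skeleton, $t$ blocks labelled by $\mathbb{Z}_4$, and nondegenerate intra-block and cross-block 2-edges. The paper only quotes this theorem, but its proof of Theorem~\ref{T2} follows the same template. As written, however, your construction cannot reach the stated bound.

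\textbf{The count is wrong.} The split you rely on is false: $2t+4\binom{t}{2}=2t+2t(t-1)=2t^2$. This is $2t(t-1)$ short of $4t^2-2t$ for every $t\ge 2$, and four cross-block 2-edges per pair would only give $(z_L-z)/z\to 1/8$. Since $|E_2|=at+b\binom{t}{2}$ must equal $4t^2-2t$ identically, you need $a=2$ and $b=8$. That is, each pair of blocks needs eight cross-block 2-edges, i.e.\ sixteen halves spread over its sixteen cross rows.

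Write $a_u$ for the vertex labelled $a$ in block $B_u$. One choice that works is the $\mathbb{Z}_4$ analogue of the paper's $f$-family: for $c\in\mathbb{Z}_4$ take $(c_u(c+1)_w,(c+2)_w;\,(c+1)_u(c+2)_w,(c+3)_u)$ together with its $u\leftrightarrow w$ swap.
\begin{itemize}
\item The halves lie in the eight rows $a_ub_w$ with $b-a\in\{1,3\}$, two per row, in distinct non-endpoint columns. So (S) holds.
\item For such a 2-edge $(e,v;e',v')$, the occupied columns of $e$ are $\{c_u,(c+1)_w,(c+2)_w,(c+2)_u\}$ and those of $e'$ are $\{(c+1)_u,(c+2)_w,(c+3)_w,(c+3)_u\}$.
\item Hence $(e,v')$ is free, so Condition 2 is not triggered.
\item The two rows also share no occupied column outside $\{v,v'\}$, so the five-cell pattern of Condition 3 cannot even start.
\end{itemize}

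\textbf{Your intra-block 2-edge is inadmissible.} For $(\{v_0,v_1\},v_2;\{v_2,v_3\},v_0)$, the opposite cells $(\{v_0,v_1\},v_0)$ and $(\{v_2,v_3\},v_2)$ are both 1-edges. So Condition 2 of Definition~\ref{D1} gives a generalized $C_4$. The shifted copy fails the same way, and both contradict your own requirement that $v'\notin e$ or $v\notin e'$. This is unavoidable for disjoint rows inside a 4-vertex block: two disjoint rows exhaust the block, so any in-block column avoiding one row is an endpoint of the other. The two rows must share a vertex, for instance $(v_0v_1,v_2;v_0v_2,v_3)$ and $(v_1v_3,v_0;v_2v_3,v_1)$, as in the paper's $e_i^1,e_i^2$.

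\textbf{Condition 3 is never actually checked.} You leave it to ``a finite check'' or an enumeration. An enumeration at fixed size cannot cover all $t$ unless you prove that every potential five-cell configuration is local, and your proposal does not prove this. Once the edges are fixed, the direct check is short and should be written out.
\begin{itemize}
\item For each 2-edge, find the columns $l\notin\{v,v'\}$ at which both rows are occupied. For the cross-block edges above there are none; for the intra-block ones the only such column is the shared vertex.
\item Then verify that no row $k$ is occupied in all three columns $l,v,v'$. With the choices above, only two candidate rows survive in each intra-block case, e.g.\ $v_2v_3$ and $2_u1_w$ for the first edge. For each of them the remaining cell in column $l$ is free.
\end{itemize}
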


\section{Infinite Family of Admissible Graphs from \(K_{5t}\) and Asymptotic Gap Analysis}\label{3}

For any integer \(q\ge 2\), the classical Zarankiewicz number satisfies the equality \(z\big(\binom{q+1}{2},q+1\big)=q(q+1)\)\cite{ref20}. The incidence bipartite graph of the complete graph \(K_{q+1}\) realizes this extreamal edge count, and it is unique up to graph isomorphism.
This bipartite incidence structure separates vertices into two disjoint partitions: the left partition collects all edges of \(K_{q+1}\), while the right partition corresponds to all vertices of \(K_{q+1}\). A pair consisting of a left vertex (an edge of \(K_{q+1}\)) and a right vertex (a vertex of \(K_{q+1}\)) forms an edge in the bipartite graph if and only if the right-side vertex is an endpoint of the edge encoded by the left vertex. This classical \(C_4\)-free extremal construction was originally derived by Reiman \cite{ref16, ref17}, and it acts as the foundational reference for almost all later combinatorial constructions that produce lower bounds for both standard and limited augmented Zarankiewicz numbers. One may rigorously confirm its \(C_4\)-free property through elementary inductive combinatorial arguments.

Building upon this foundational incidence graph framework, the present work develops systematic admissible augmentation strategies by attaching valid nondegenerate 2-edges to the original \(C_4\)-free 1-edge skeleton, which yields tighter lower bounds for the maximal biquadratic SOS rank $BSR(m,n)$ through the chain of inequalities \(BSR(m,n)\ge z_L(m,n)\ge z(m,n)\). Incidence graphs of \(K_{4t}\) are adopted as base structures in all existing augmented constructions \cite{ref19}. In contrast, bipartite incidence graphs of \(K_{5t}\) are selected as the fundamental structural backbone for the augmentation framework proposed herein. The full vertex set of \(K_{5t}\) is partitioned into t disjoint 5-vertex subblocks, and cyclic labeling by elements of the additive group \(\mathbb{Z}_5\) is assigned to vertices inside every subblock. Two disjoint families of admissible 2-edges are explicitly constructed under this cyclic labeling convention; all generated 2-edges are made to satisfy the generalized \(C_4\)-free restrictions encoded by Conditions (S), (C2) and (C3), which are formally introduced in Definition \ref{D1}.

Two distinct families of admissible 2-edges are distinguished: within-block 2-edges and cross-block 2-edges. For each isolated 5-vertex subblock, three mutually compatible nondegenerate 2-edges are constructed. Overlaps with preexisting 1-edge cells are eliminated, and forbidden five-cell configurations are prevented from forming within individual subblocks. A total of $3t$ within-block 2-edges are accumulated across all $t$ disjoint subblocks. Cross-block 2-edges are defined for every unordered pair of distinct subblocks \(V_i\) and \(V_j\). Symmetric cyclic shifts on \(\mathbb{Z}_5\) are exploited to produce 20 symmetric cross-block 2-edges per pair of subblocks, which yields a total of \(20\binom{t}{2}=10t(t-1)\) cross-block 2-edges for the complete block collection.

Comprehensive categorization, quantitative counting formulas, and functional characteristics for both families of admissible 2-edges are summarized in Table \ref{tab:1}. Full admissibility validation for the globally augmented graph is also provided alongside the tabular summary.
\begin{table}[htbp]
  \centering
  \caption{Summary of the roles of within-block and cross-block 2-edges for the $K_{5t}$ construction.}\label{tab:1}
  \begin{tabular}{c c c}
    \hline
    Type of 2-Edge & Number Added & Role in the Construction \\
    \hline
    Within-block   & $3t$         & Uses three admissible row pairs inside each 5-vertex block \\
    Cross-block    & $20\binom{t}{2}$ & Pairs cross-block rows by a cyclic $\mathbb Z_5$ rule \\
    \hline
  \end{tabular}
\end{table}

\begin{theorem}(Quadratic $t$-Bound for $K_{5t}$)\label{T2}
For every integer $t\geq1$, let $m=\begin{pmatrix}5t \\2\end{pmatrix}$ and $n=5t$. Then 
\begin{equation}
z_L(m,n)\geq2
\begin{pmatrix}
5t \\
2
\end{pmatrix}+10t^2-7t=35t^2-12t.
\end{equation}
Consequently,
\begin{equation}
BSR(m,n)\geq2
\begin{pmatrix}
5t \\
2
\end{pmatrix}+10t^2-7t=35t^2-12t.
\end{equation}
For large $t$,
\begin{equation}
\frac{z_L-z}{z}\geq\frac{10t^2-12t}{25t^2-5t}=\frac{10t-12}{25t-5}\longrightarrow\frac{2}{5}.
\end{equation}
\end{theorem}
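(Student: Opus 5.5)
The plan is to give an explicit construction and then check admissibility. The counting is routine. Start from the incidence bipartite graph $G_1$ of $K_{5t}$. Its rows are the $\binom{5t}{2}$ pairs $\{a,b\}$ and its columns are the $5t$ vertices, with 1-edges $(\{a,b\},a)$ and $(\{a,b\},b)$. Then $|E_1|=2\binom{5t}{2}=25t^2-5t=z(m,n)$. Partition the vertices into blocks $V_1,\dots,V_t$ and write the vertices of $V_r$ as $(r,x)$ with $x\in\mathbb Z_5$. Add two families of nondegenerate 2-edges $(i,j;k,l)$, each with $j\notin i$ and $l\notin k$, so that no half lands on a 1-edge cell:
\begin{itemize}
\item three within-block 2-edges in each $V_r$, all using rows and columns inside $V_r$;
\item for each pair $r<s$, twenty cross-block 2-edges, indexed by $x\in\mathbb Z_5$ and four shifts $c\in\mathbb Z_5\setminus\{0\}$.
\end{itemize}
Each cross-block 2-edge pairs a row $\{(r,x),(s,x+c)\}$ with a cyclically shifted row, and places its two columns so that each column lies in the block opposite to an endpoint of the paired row. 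The total is then $25t^2-5t+3t+10t(t-1)=35t^2-12t$. The statement for $BSR$ follows from the inequality chain of \cite{ref18} recalled above. The asymptotic ratio follows because $(10t^2-7t)/(25t^2-5t)\ge(10t-12)/(25t-5)$ and the right-hand side tends to $2/5$.

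Admissibility is checked condition by condition.
\begin{itemize}
\item Condition (S): the cells used must be pairwise distinct. Cross-block cells for the pair $\{r,s\}$ lie in rows meeting both $V_r$ and $V_s$, and within-block cells lie in rows inside one block, so collisions can occur only inside one family for a fixed block pair. Inside such a family the cells are separated by the $\mathbb Z_5$ index.
\item Condition 1 holds automatically, because $G_1$ is $C_4$-free.
\item Condition 2: for a 2-edge $(i,j;k,l)$, occupation of $(i,l)$ and $(k,j)$ by 1-edges would force $l\in i$ and $j\in k$. The shifts are chosen to break at least one of these memberships. One must also rule out these cells being halves of other 2-edges; this reduces to a congruence mod 5 that fails for nonzero shifts.
\end{itemize}

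The main obstacle is Condition 3. It involves a 2-edge $(i,j;p,q)$ and a cell $(k,l)$ such that $(k,j)$, $(k,q)$, $(i,l)$ and $(p,l)$ are all occupied. My approach is a locality reduction. All rows and columns appearing in such a configuration are determined by the 2-edge and by at most a bounded number of further occupied cells. Each cell, whether a 1-edge or a 2-edge half, involves vertices from at most two blocks. Hence any violating configuration lives inside the union of a bounded number of blocks, at most four or five. The construction is invariant under permuting blocks and under the simultaneous cyclic shift $x\mapsto x+1$ in $\mathbb Z_5$. So it suffices to rule out Condition 3 in the construction for $t\le 5$, up to these symmetries.

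The case analysis splits on where $l$ lies. If $l$ is outside every block touched by the 2-edge, then $(i,l)$ and $(p,l)$ can only be 1-edges, which forces $l\in i\cap p$; this is excluded by the choice of rows in the 2-edges. Otherwise $l$ lies in one or two specified blocks, and the remaining finitely many residue patterns mod 5 are handled by hand. They can also be confirmed by the exhaustive enumeration procedure of the Appendix, with the $t=1$ case matching the value $z_L(10,5)=23$.

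If the first choice of shifts fails some instance, I would first adjust which four of the five residues $c$ are used, keeping the count of twenty per block pair. Then I would rerun the bounded check.
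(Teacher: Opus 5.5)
Your proposal has a genuine gap. It never fixes the 2-edges, and for a lower bound proved by construction, the explicit construction together with its check \emph{is} the proof. The three within-block 2-edges are not given at all. For the cross-block family you specify only the row pairing $\{(r,x),(s,x+c)\}\leftrightarrow\{(r,x+1),(s,x+1+c)\}$ and leave the columns to a verbal rule. But the column residues decide admissibility. For instance, $(\{(r,x),(s,x+c)\},(s,x+1+c);\{(r,x+1),(s,x+1+c)\},(r,x))$ fits your row pairing, puts one column in each block and has no half on a 1-edge. Yet both of its opposite cells are 1-edges, so Condition 2 fails; inside a block, $(01,2;23,0)$ fails in the same way.

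Your argument for (S) is also incomplete. Each row $\{(r,x),(s,x+c)\}$ occurs in two different 2-edges (indices $x$ and $x-1$), so ``separation by the $\mathbb{Z}_5$ index'' is not enough; you must show that the two columns placed in that row differ. Finally, your treatment of Condition 3 (reduce to $t\le 5$, check by hand or computer, change residues if something fails) is a search plan, not an argument. The value $z_L(10,5)=23$ cannot confirm it: the upper bound says nothing about whether a specific family is admissible, and the matching bound $z_L(10,5)\ge 23$ is exactly the $t=1$ case of the theorem you are proving.

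What is missing is a concrete family plus the structural facts that make the verification short. The paper takes $e_i^1=(0_i1_i,2_i;0_i2_i,3_i)$, $e_i^2=(1_i3_i,0_i;2_i3_i,1_i)$ and $e_i^3=(2_i4_i,1_i;3_i4_i,2_i)$. For $k\in\mathbb{Z}_5$ it adds $f_{ij}^{(k)}=(k_i(k+1)_j,(k+2)_j;(k+1)_i(k+2)_j,(k+3)_i)$ and $h_{ij}^{(k)}=(k_i(k+2)_j,(k+3)_j;(k+1)_i(k+3)_j,(k+4)_i)$, together with their $i\leftrightarrow j$ swaps. These are exactly twenty per block pair, realizing your shifts $c=1,2,4,3$. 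With this choice no locality reduction is needed:
\begin{itemize}
\item Every cross-block row $\{a_i,b_j\}$ with $a\neq b$ carries exactly one half in each of its two blocks, so it has only two occupied columns in any block.
\item In each cross-block 2-edge $(p,q;r,s)$, the column $q$ is an endpoint of $r$ and the cell $(p,s)$ is unoccupied. Moreover, $q$ is the only column in which both $p$ and $r$ are occupied. Hence Condition 2 holds and Condition 3 is vacuous.
\item For each within-block 2-edge, the two rows have exactly one common occupied column $l$ outside the 2-edge's columns (namely $0_i$, $3_i$, $4_i$ respectively). Completing the five-cell pattern would need a third row with three prescribed occupied columns in $V_i$. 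No cross-block row has this, and direct inspection of the ten rows of $V_i$ shows no within-block row has it either.
\end{itemize}
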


\begin{proof}Step 1. Construct a reasonable limited augmented bipartite graph $G = (S, T, E_1 \cup E_2)$.

Let $V=\bigcup_{i=1}^tV_i,$ where each block $V_i$ consists of five vertices. The vertices in each block are labeled with elements of the cyclic group $\mathbb{Z}_5=\{0,1,2,3,5\}$. Write
\[V_i=\{0_i,1_i,2_i,3_i,4_i\}.\]

Take $T = V$ be the right vertices of the limited augmented bipartite graph$G$, and its size be $5t$. Take $S=\begin{pmatrix}V \\2\end{pmatrix}$ be the left vertices, its elements are all the edges of $K_{5t}$ and its size be $\begin{pmatrix}5t \\2\end{pmatrix}$. We divide all the edges into the following three parts
\begin{enumerate}[label=(\roman*)] 
\item 1-edges. Define $E_1$ as the association diagram of $K_{5t}$
\[E_1=\{(e,v)\mid e\in S,v\in T,v\text{s.is an endpoint of}e\}.\]
Then $|E_1|=2\begin{pmatrix}5t \\2\end{pmatrix}$ and $G_{1}=(S,T,E_{1})$ is $C_4$-free.

\item Within-block 2-edges. For each block $V_{i}$, add the following three nondegenerate 2-edges:
\[e_i^1=(0_i1_i,2_i;0_i2_i,3_i),\]
\[e_i^2=(1_i3_i,0_i;2_i3_i,1_i),\]
\[e_i^3=(2_i4_i,1_i;3_i4_i,2_i).\]

\item Cross-block 2-edges. Consider the pairs of different unordered blocks $\{V_i,V_j\}$ for $i,j=1,\cdots,t,i\neq j.$ For each $k\in\mathbb{Z}_4$, define
\[f_{ij}^{(k)}=
\begin{pmatrix}
k_i(k+1)_j, & (k+2)_j; & (k+1)_i(k+2)_j, & (k+3)_i
\end{pmatrix},\]
\[h_{ij}^{(k)}=
\begin{pmatrix}
k_i(k+2)_j, & (k+3)_j; & (k+1)_i(k+3)_j, & (k+4)_i
\end{pmatrix}.\]
Swap $i$ and $j$, get $f_{ji}^{(k)}$ and $h_{ji}^{(k)}$. The complete set of 20 cross-block 2-edges for the pair $\{i,j\}$ is
\[\mathcal{E}_{ij}=\{f_{ij}^{(k)},f_{ji}^{(k)},h_{ij}^{(k)},h_{ji}^{(k)}\mid k=0,1,2,3,5\}.\]

\end{enumerate}

Step 2. Verification of admissibility. We check the conditions of Definition \ref{D1}.

(S) Simplicity.
\begin{itemize}
\item For $e_i^1$, halves are $(0_i1_i,2_i)$ and $(0_i2_i,3_i)$. Since $2_i$ is not an endpoint of $0_i1_i$ and $3_i$ is not an endpoint of $0_i2_i$, neither half is on 1-edge. For $e_i^2$ and $e_i^3$ similarly $0_i$ not endpoint of $1_i3_i$, $1_i$ not endpoint of $2_i3_i$,  $1_i$ not endpoint of $2_i4_i$, $2_i$ not endpoint of $3_i4_i$.

\item For a cross-block half $(x_{i}y_{j},z)$, $z=(k+2)_{j}$, $z=(k+3)_{i}$, $z=(k+3)_{j}$ or $z=(k+4)_{i}$. $k({\mathrm{mod}}5) \not\equiv k+1({\mathrm{mod}}5) \not\equiv k+2({\mathrm{mod}}5)\not\equiv k+3({\mathrm{mod}}5)\not\equiv k+4({\mathrm{mod}}5)$, so $z\neq y$ and $z\neq x$.  Hence z is never an endpoint of the row, so no half is a 1-edge.

\item All halves are distinct: within a single 2-edge the two halves differ; different $k$ give different first rows; between $f_{ij}^{(k)},h_{ij}^{(k)}$ and $f_{ji}^{(\ell)},h_{ji}^{(\ell)}$ the rows are $x_i y_j$ versus $y_j x_i$ (distinct unless $\{x,y\}$ is the same and order swaps, but then columns differ because one uses a column from $V_j$ and the other from $V_i$); different block pairs involve disjoint vertex sets. Thus, no cell is shared between any two edges. Condition (S) holds.
\end{itemize}

(C2) Nondegenerate 2-edge conflict. Take $e_i^1$. Opposite cells are $(0_i1_i,3_i)$ and $(0_i2_i,2_i)$; $(0_i1_i,3_i)$ is not a 1-edge ($3_i$ not endpoint of $0_i1_i$); $(0_i2_i,2_i)$ is a 1-edge ($2_i$ endpoint of $0_i2_i$). Exactly one is occupied. For $e_i^2$, opposite cells $(1_i3_i,0_i)$ (free) and $(1_i4_i,4_i)$ (occupied). For $e_i^3$, opposite cells $(2_i4_i,2_i)$ (occupied) and $(3_i4_i,1_i)$ (free). For $f_{ij}^{(k)}=(p,q;r,s)$ with $p=k_i(k+1)_j$, $q=(k+2)_j$, $r=(k+1)_i(k+2)_j$, $s=(k+3)_i$: $(p,s)$ is free (column $(k+3)_i$ not an endpoint of $p$); $(r,q)$ is a 1-edge (column $(k+2)_j$ endpoint of $r$). For $h_{ij}^{(k)}=(p,q;r,s)$ with $p=k_i(k+2)_j$, $q=(k+3)_j$, $r=(k+1)_i(k+3)_j$, $s=(k+4)_i$: $(p,s)$ is free (column $(k+4)_i$ not an endpoint of $p$); $(r,q)$ is a 1-edge (column $(k+3)_j$ endpoint of $r$). Hence Condition (C2) holds for all 2-edges.

(C3) Five-cell pattern. We must show the following: for every 2-edge $e = (i, j; p, q)$ and every $k \not\in \{i, p\}, l \not\in \{j, q\}$, the five cells 
\[(k, l), (k, j), (k, q), (i, l), (p, l)\] 
are not all occupied. We split the possibilities for $e$ and $k$ into three cases.\\
Case 1: $e$ is a within-block 2-edge and $k$ is also within-block.

Take $e = e^3_i = (2_i4_i, 1_i;  3_i4_i, 2_i)$. The two fixed cells $(2_i4_i, l)$ and $(3_i4_i, l)$ are both 1-edges only when $l$ is a common endpoint of the rows $2_i4_i$ and $3_i4_i$. The only common endpoint is $4_i$. Hence $l = 4_i$ (allowed since $4_i \notin \{1_i, 2_i\}$). Then the five cells become
\[(k,4_i),(k,1_i),(k,2_i),(2_i4_i,4_i),(3_i4_i,4_i).\]
The last two are 1-edges (occupied). For all five to be occupied we need $(k,4_i),(k,1_i),(k,2_i)$. Suppose $(k,1_i)$ and $(k,2_i)$ are occupied, then $k = 1_i2_i$. Hence $(k,4_i)=(1_i2_i,4_i)$ is unoccupied. Thus, the five cells cannot all be occupied.

For $e = e^1_i = (0_i1_i, 2_i;  0_i2_i, 3_i)$, a symmetric argument forces $l = 0_i$ and then $k = 2_i3_i$. And for $e = e^2_i = (1_i3_i, 4_i;  1_i4_i, 0_i)$, a symmetric argument forces $l = 1_i$ and then $k = 0_i1_i$, which also fails. Hence Condition 3 holds in this case.\\
Case 2: $e$ is a cross-block 2-edge.

Take $e=h_{ij}^{(k)}=(p,q;r,s)$ with $p=k_i(k+2)_j$, $q=(k+3)_j$, $r=(k+1)_i(k+3)_j$, $s=(k+4)_i$.  The two rows $p=k_i(k+2)_j$ and $r=(k+1)_i(k+3)_j$ are vertex-disjoint, their endpoints are all four distinct. For any $l\notin\{q,s\}$,  consider the two fixed cells $(p,l)$ and $(r,l)$.
\begin{itemize}
\item They cannot both be 1-edges because that would require $l$ to be a common endpoint of $p$ and $r$, which is impossible since $p$ and $r$ are disjoint.

\item Could one be a 1-edge and the other a half of some 2-edge? The only columns for which $(p,l)$ can be a half are $l = (k+3)_i$ or $l = (k+4)_j$ (the latter from $r$'s other half). A direct case check (cyclic symmetry allows fixing $k=0$ without loss of generality):
\begin{itemize}
\item If $l = 3_i$ (i.e., $(k+3)_i$), then $(p,l)$ is a half (occupied), but $(r,l) = \big((k+1)_i(k+3)_j, 3_i\big)$: $3_i$ is not an endpoint of $(k+1)_i(k+3)_j$ (endpoints are $(k+1)_i$ and $(k+3)_j$), and it is not a half of $r$ (halves of $r$ are $s = (k+4)_i$ and $(k+4)_j$). Hence $(r,l)$ is unoccupied.

\item If $l = 4_j$ (i.e., $(k+4)_j$), then $(r,l)$ is a half (occupied), but $(p,l) = \big(k_i(k+2)_j, 4_j\big)$: $4_j$ is not an endpoint of $k_i(k+2)_j$ (endpoints are $k_i$ and $(k+2)_j$), and it is not a half of $p$ (halves of $p$ are $q = (k+3)_j$ and $(k+3)_i$). Hence $(p,l)$ is unoccupied.

\item For any other $l$, both fixed cells are unoccupied.
\end{itemize}
\end{itemize}
Same as $h_{ij}^{(k)}$.  Take $e=f_{ij}^{(k)}=(p,q;r,s)$, when $l = 2_i$, $(r,l)$ is unoccupied. And $l = 3_j$, $(p,l)$ is unoccupied. 

Thus, in all subcases, the two fixed cells $(p, l)$ and $(r, l)$ are never both occupied.
Therefore, the five-cell pattern cannot be completed, regardless of $k$ and $l$. Condition 3 holds for all cross-block 2-edges.\\
Case 3: $e$ is a within-block 2-edge and k is a cross-block edge.

Take $e = e^3_i = (2_i4_i, 1_i;  3_i4_i, 2_i)$. As in Case 1, the only way $(2_i4_i,l)$ and $(3_i4_i,l)$ are both occupied is  $l = 4_i$. Then we need  $(k,4_i),(k,1_i),(k,2_i)$ all occupied. Assume $(k,1_i)$ occupied implies that $1_i$ is an endpoint of $k$ or that $(k,1_i)$ is a half of some 2-edge. The only 2-edges that have a half with column are as follows:
\begin{itemize}
\item $e^3_i$ itself: $2_i4_i, 1_i$ row $2_i4_i$ not k(since k is cross-block).

\item Cross-block edges: from $f_{ij}^{(k)}$ and $h_{ij}^{(k)}$, halves have columns $(k+2)_j$ or $(k+3)_i$ and $(k+3)_j$ or $(k+4)_i$.  For column $1_i$, we need $(k+3)_{i}=1_{i}\mathrm{mod}5,\mathrm{i.e.},k\equiv3({\mathrm{mod}}5),$ and $(k+4)_{i}=1_{i}\mathrm{mod}5,\mathrm{i.e.},k\equiv2({\mathrm{mod}}5).$ First for $f_{ij}^{(3)}=(3_i4_j,0_j;4_i0_j,1_i)$ has a second half of $(4_i0_j,1_i)$, not row $3_i4_j$. The first half is $(3_i4_j,0_j)$ column $0_j$ not $1_i$. Thus $f_{ij}^{(3)}$ does not give a half with column $1_i$ and row $3_i4_j$. Then for $h_{ij}^{(2)}=(2_i4_j,0_j;3_i0_j,1_i)$ has a second half of $(3_i0_j,1_i)$, not row $2_i4_j$. The first half is $(2_i4_j,0_j)$ column $0_j$ not $1_i$. Thus $h_{ij}^{(2)}$ does not give a half with column $1_i$ and row $2_i4_j$. 

\item from $f_{ji}^{(k)}$ and $h_{ji}^{(k)}$, halves have columns $(k+2)_i$ or $(k+3)_j$ and $(k+3)_i$ or $(k+4)_j$. For column $1_i$, we need $(k+2)_{i}=1_{i}\mathrm{mod}5,\mathrm{i.e.},k\equiv4({\mathrm{mod}}5),$ and $(k+3)_{i}=1_{i}\mathrm{mod}5,\mathrm{i.e.},k\equiv3({\mathrm{mod}}5).$ First for $f_{ij}^{(4)}=(4_j0_i,1_i;0_j1_i,2_j)$ has a first half of $(4_j0_i,1_i)$, not row $0_j1_i$. The second half is $(0_j1_i,2_j)$ column $2_j$ not $1_i$. Then for $h_{ij}^{(3)}=(3_j0_i,1_i;4_j1_i,2_j)$ has a first half of $(3_j0_i,1_i)$, not row $4_j1_i$. The second half is $(4_j1_i,2_j)$ column $2_j$ not $1_i$. Thus $h_{ij}^{(2)}$ does not give a half with column $1_i$ and row $4_j1_i$. 

\end{itemize}

Therefore, $(k,1_i)$ cannot be occupied by a half. It must be a 1-edge, so $1_i$ must be an
endpoint of $k$. Similarly, $(k, 2_i)$ occupied forces $2_i$ to be an endpoint of $k$. Hence, $k$ must contain both $1_i$ and $2_i$ as endpoints, $i.e., k = 1_i2_i$. However, $1_i2_i$ is a within-block edge, contradicting the assumption that $k$ is cross-block. Therefore, no cross-block k can satisfy the required triple.

A symmetric argument for $e = e^1_i = (0_i1_i, 2_i;  0_i2_i, 3_i)$, a symmetric argument forces $l = 0_i$ and then $k = 2_i3_i$(within-block). And for $e = e^2_i = (1_i3_i, 4_i;  1_i4_i, 0_i)$, a symmetric argument forces $l = 1_i$ and then $k = 0_i1_i$(within-block), again impossible for cross-block $k$. Thus, Condition 3 holds in this case as well.

Since all cases are covered, G contains no generalized $C_4$-cycle. The total number
of edges includes $3t$ within-block edges and $10t(t-1)$ cross-block edges. Hence, $|E_2|=3t+10t(t-1)=10t^2-7t$ and 
\[z_L(m,n)\geq|E_1|+|E_2|=2
\begin{pmatrix}
5t \\
2
\end{pmatrix}+10t^2-7t=35t^2-12t.\]

Since $z=2\begin{pmatrix}5t\\2\end{pmatrix}=25t^2-5t,$ we have
\[\frac{z_L-z}{z}\geq\frac{10t^2-7t}{25t^2-5t}=\frac{10t-7}{25t-5}\longrightarrow\frac{2}{5}\quad\mathrm{as} \quad t\to\infty.\]

\end{proof}

\begin{remark}
For $t=1$, we get $z_L(10,5)\geq2\begin{pmatrix}5 \\2\end{pmatrix}+10-7=23.$ For $t=2$, we have $z_L(45,10)\geq2\begin{pmatrix}10 \\2\end{pmatrix}+40-14=116.$ For $t=3$, we obtain $z_L(210,15)\geq2\begin{pmatrix}15 \\2\end{pmatrix}+90-21=279.$ For $t=4$,  $z_L(380,20)\geq2\begin{pmatrix}20 \\2\end{pmatrix}+160-28=512.$ 
\end{remark}

Classify and explain the occupancy rules for rows into two categories. For cross-block rows, all rows cross-subblocks will only be used once. For the \(K_{5t}\) construction, there are a total of \(20\binom{t}{2}\) cross-block 2-edges between each pair of different subblocks. Each cross-block row only appears within one of these 2-edges and there is no repeated reuse. For within-block row, each 4-vertex subblock only contains 3 within-block 2-edges. There are still unused legal rows within the subblock, indicating partial occupation and not exhausting all possible row pairs. Under the non-degenerate 2-edge and no generalized \(C_4\) conflict hard constraints, the relative difference limit \(\frac{z_L - z}{z}\to \frac{2}{5}\) that this \(K_{5t}\) infinite graph family can achieve is already the theoretical upper bound under this construction framework and cannot be further improved.

\section{ Exact values for $10\times5$}\label{4}
From Theorem \ref{T2} we know that $z_{L}(10,5)\geq23.$ We now prove that this bound is tight.

\begin{theorem}\label{T3}
The exact limited augmented Zarankiewicz number for the $10 \times 5$ case is 23; that is,
$z_L(10, 5) = 23$.
\end{theorem}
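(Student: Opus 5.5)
The lower bound $z_L(10,5)\ge 23$ is the case $t=1$ of Theorem \ref{T2}. It remains to prove $z_L(10,5)\le 23$, i.e.\ that no admissible augmentation of a $10\times5$ extremal $C_4$-free graph carries four or more 2-edges.

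\textbf{Reduction to $K_5$ and to a table of cells.} Since $|E_1|=z(10,5)=20$ is required, $E_1$ must be an extremal $C_4$-free $10\times 5$ graph. By the uniqueness statement recalled at the start of Section \ref{3} (the case $q=4$), $G_1$ is, up to isomorphism, the incidence graph of $K_5$. So we may fix $S=\binom{[5]}{2}$, $T=[5]$, with $E_1$ the endpoint relation. Each row $ab$ has exactly three free cells $(ab,c)$ with $c\notin\{a,b\}$, giving $30$ free cells in total. Every 2-edge uses two distinct free cells, and by (S) distinct 2-edges use disjoint pairs of cells.

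\textbf{Restricting the admissible 2-edges.}
\begin{itemize}
\item \emph{Nondegenerate 2-edges.} For a nondegenerate 2-edge $(ab,c;de,f)$, Condition 2 forbids both $(ab,f)$ and $(de,c)$ from being occupied. In particular, they cannot both be 1-edges, so $f\in\{a,b\}$ and $c\in\{d,e\}$ cannot hold simultaneously.
\item \emph{Degenerate 2-edges.} A row-degenerate edge $(ab,c;ab,d)$ and a column-degenerate edge $(ab,c;de,c)$ must also be tested against Condition 3. For a row-degenerate edge, taking $l$ equal to $a$ or $b$ and $k$ the row through $c$ and $d$ often completes the five-cell pattern using 1-edges alone.
\end{itemize}
I would first show that most degenerate 2-edges are forbidden by the 1-edge skeleton alone, which leaves a manageable list of candidate 2-edges. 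This list should then be reduced modulo the action of $S_5$, which acts on the whole structure.

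\textbf{Excluding four compatible 2-edges.} The main step is to prove that no four candidate 2-edges are simultaneously admissible. I would argue by contradiction. Suppose $e_1,\dots,e_4$ are pairwise compatible. Use $S_5$-symmetry to normalize $e_1$, say to $(01,2;02,3)$ as in Theorem \ref{T2}, or to one of the few other orbit representatives.

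For each normalized $e_1$, determine the set of 2-edges compatible with it. Then show that the compatibility graph on this set has no triangle. Condition 3 is the key tool here. Once several halves are placed, a row $k$ gains occupied cells beyond its two 1-edges. Then, for a 2-edge $(i,j;p,q)$ whose rows share an endpoint $l$, the cells $(k,j)$ and $(k,q)$ become occupied for some row $k$ containing $l$, which closes the pattern. A counting heuristic helps: each additional half in a row through vertex $v$ creates new Condition 3 threats for every 2-edge whose two rows meet at $v$. I would try to turn this into a pigeonhole argument over the five vertices, showing that four 2-edges force some vertex to be overloaded.

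\textbf{Main obstacle and fallback.} The hard part is making this case analysis complete and rigorous. If a clean pigeonhole argument does not emerge, I would fall back on exhaustive enumeration. That means listing all $S_5$-orbits of admissible triples of 2-edges and checking that none extends to a fourth. This is exactly the check performed by the enumeration algorithm described in the appendix, and its output would certify $z_L(10,5)\le 20+3=23$.
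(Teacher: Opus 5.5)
\textbf{The central step fails, because the statement is false under Definition~\ref{D1}.} You need to show that no four 2-edges can be simultaneously admissible. Under Definition~\ref{D1} this is not true, so neither a pigeonhole argument nor an orbit enumeration can prove it.

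Here is a counterexample. Pair the ten rows along a perfect matching of vertex-disjoint edges of $K_5$ (a perfect matching of the Petersen graph). Put one half in each row, using the fifth vertex as the column of one half:
\[
(01,4;\,23,0),\quad (02,3;\,14,0),\quad (03,1;\,24,0),\quad (04,2;\,13,0),\quad (12,0;\,34,1).
\]
\begin{itemize}
\item Condition (S): the ten halves are free cells in ten distinct rows, so (S) holds.
\item Condition~3: take any listed 2-edge $(ab,j;cd,q)$. Its rows $ab$ and $cd$ are disjoint and carry no other half. So their occupied columns are $\{a,b,j\}$ and $\{c,d,q\}$, and these sets meet only inside $\{j,q\}$. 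Hence no column $l\notin\{j,q\}$ has both $(ab,l)$ and $(cd,l)$ occupied, and Condition~3 fails for every row $k$.
\item Condition~2: $j$ is the fifth vertex, so the opposite cell $(cd,j)$ is neither a 1-edge nor a half.
\end{itemize}
So these five 2-edges are admissible, and $z_L(10,5)\ge 25$. Any four of them already refute $|E_2|\le 3$.

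This also shows where your heuristic breaks. A 2-edge whose rows are vertex-disjoint can be attacked through Condition~3 only if one of its rows carries a second half. In $K_5$ you can choose all 2-edges this way and use each row only once. In $K_4$ you cannot: there, pairing disjoint rows forces both opposite cells to be 1-edges, which violates Condition~2.

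Your side claim about row-degenerate 2-edges is also wrong. You say $(ab,c;ab,d)$ is killed by 1-edges alone via $k=cd$, $l\in\{a,b\}$. But the cell $(cd,l)$ with $l\notin\{c,d\}$ is never a 1-edge, so that pattern never closes from the skeleton alone.

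Finally, your fallback is not the paper's check. The paper only tests candidate fourth edges against the single fixed triple $e^1,e^2,e^3$. That shows this triple is maximal, not that $|E_2|\le 3$. Enumerating all $S_5$-orbits of admissible triples, as you propose, is the logically correct test. Carried out, it would find admissible quadruples such as those above, so $z_L(10,5)=23$ cannot be established.
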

\begin{proof}
From Theorem \ref{T2}, when $t=1$ we get $m=10$, $n=5$ and $z_{L}(10,5)\geq23.$ Now we need to prove that $z_{L}(10,5)\leq23.$ Let G be any admissible limited augmented bipartite graph on $10 \times 5$ vertices with $|E_1|=z(10,5)=20$ (the maximum possible number of 1-edges without a classical $C_4$). Therefore, the largest number of 1-edges is $|E_1|=z(10,5)=20$. Next, we will discuss the largest number of 2-edges. Then we will prove $|E_2|\leq 3$.

For $t=1$, $m=10$, $n=5$ there has no cross-block 2-edges. Therefore, we will only discuss within-block 2-edges. Label the left vertices (edges of $K_5$) as 
\[L=\{01,02,03,04,12,13,14,23,24,34\},\]
the right vertices (edges of $K_5$) as 
\[R=\{0,1,2,3,4\}.\]
It is known that there has no degenerate edges in within-block 2-edges\cite{ref19}. In order not to occupy the 1-edges, the unoccupied cells $U$ are all pairs $(ij, k)$ with $k \notin \{ i, j\}$:
\[\begin{aligned}
U=\{ & (01,2),(01,3),(01,4),(02,1),(02,3),(02,4)\\
 & (03,1),(03,2),(03,4),(04,1),(04,2),(04,3)\\
 & (12,3),(12,4),(12,0),(13,0),(13,2),(13,4)\\
 & (14,0),(14,2),(14,3),(23,0),(23,4),(23,1) \\
 & (24,0),(24,1),(24,3),(34,0),(34,1),(34,2)\}.
\end{aligned}\]
From Theorem \ref{T2}, we have the within-block 2-edges: 
\[e_i^1=(0_i1_i,2_i;0_i2_i,3_i),\]
\[e_i^2=(1_i3_i,0_i;2_i3_i,1_i),\]
\[e_i^3=(2_i4_i,1_i;3_i4_i,2_i).\]
Suppose there is another within-block 2-edges $e_i^4=(ij,k;xy,l)$, where $ij\neq xy$, $k\neq l$, $(ij,k)\in U$ and $(xy,l) \in U$. We used the following deterministic enumeration procedure to discover that there does not exist any $e_4$ such that $e_1$, $e_2$, $e_3$, and $e_4$ satisfy the Definition\ref{D1}.

\begin{enumerate}
    \item $(ij,k)\in U$ and $(xy,l) \in U$. Each such pair $e_i^4=(ij,k;xy,l)$ is a candidate 2-edge.
    \item Discard a candidate set immediately if two 2-edges share a half, or if a half coincides with a 1-edge.
    \item For every remaining candidate 2-edge $e_i^4=(ij,k;xy,l)$, enumerate all cells $(m,n)$ with $m \notin \{ij,xy\}$ and $n \notin \{k,l\}$, and test the five cells
    \[
    (n,m),\ (n,k),\ (n,l),\ (ij,m),\ (xy,m)
    \]
    in Condition 3 of Definition\ref{D1}.
    \item A set of candidate 2-edges is declared admissible exactly when none of the above tests finds a generalized $C_4$-cycle.
\end{enumerate}

For the $10 \times 5$, the above process is finite. Through calculation, the number of enumerations obtained is 660. Based on the above, we obtain $|E_2|\leq 3$. So the sum of the maximum number of the 1-edges and the maximum number of the 2-edges is $20+3=23$. That is
\[z_L(10,5)=|E_1|+|E_2|\leq20+3=23.\]
Combined with  $z_{L}(10,5)\geq23$, we have $z_{L}(10,5)=23$.

\end{proof}

\section{ Conclusion}\label{5}

This paper addresses the critical gap in existing constructions for limited augmented Zarankiewicz numbers \(z_L(m,n)\), which serve as tight combinatorial lower bounds for the maximal SOS rank \(\mathrm{BSR}(m,n)\) of biquadratic forms. Prior literature only established an infinite family built on \(K_{4t}\) incidence graphs with an asymtotic relative gap \(\frac{z_L-z}{z}\to \frac14\), leaving open whether larger complete-graph block partitions could generate comparable admissible augmented bipartite graphs and yield new quadratic lower bounds for \(BSR\). To fill this gap, we systematically develop a novel infinite construction based on the incidence bipartite graph of \(K_{5t}\), with vertex sets decomposed into disjoint five-vertex subblocks labeled via the cyclic group \(\mathbb Z_5\).

Our core theoretical contributions fall into two primary categories. First, we explicitly design two disjoint families of nondegenerate admissible 2-edges: within-block 2-edges with total count 3t and cross-block 2-edges summing to \(20\binom{t}{2}=20t(t-1)\). Combining these with the maximal \(C_4\)-free 1-edge skeleton of the \(K_{5t}\) incidence graph, we derive a closed-form quadratic lower bound
\[z_L\bigg(\binom{5t}{2},5t\bigg)\geq 35t^2-12t,\]
and prove the corresponding asymptotic gap converges to \(\frac{2}{5}\) as \(t\to\infty\). This new limit provides a direct benchmark against the \(\frac14\) asymptotic constant of the classical \(K_{4t}\) family, revealing that increasing the size of each complete-graph subblock alters the long-term growth rate of the gap between \(z_L\) and the classical Zarankiewicz number. Second, we resolve the exact value of \(z_L(10,5)\) corresponding to the base case \(t=1\). By exhaustive finite enumeration of all candidate nondegenerate 2-edges and rigorous verification of the three generalized \(C_4\)-free constraints (Simplicity Condition (S), Condition 2 and Condition 3), we confirm that at most three pairwise conflict-free 2-edges can be added to the \(10\times5\) extremal \(C_4\)-free graph with \(|E_1|=20\), which yields the tight equality \(z_L(10,5)=23\). This exact result validates the correctness of our block-wise \(\mathbb Z_5\) cyclic augmentation strategy for single-block \(K_5\) incidence graphs.

This work opens several promising directions for subsequent research. First, it remains an open problem to construct infinite families based on general \(K_{kt}\) for \(k\geq6\) and characterize the monotonic relationship between block size k and the corresponding asymptotic relative gap \(\lim_{t\to\infty}\frac{z_L-z}{z}\). Second, one may integrate the lifting method with our \(K_{5t}\) base graphs to generate even larger admissible augmented bipartite graphs and derive sharper numerical lower bounds for \(z_L\) on arbitrary grid dimensions. Third, future work can attempt to determine more exact values of \(z_L\) for small \(t\geq2\) such as the \(45\times10\) and \(210\times15\) cases, or prove universal upper bounds matching our quadratic lower limits for the \(K_{kt}\) incidence graph family. Finally, extending the augmented graph framework to higher-order degenerate multi-edge structures may produce further refined lower bounds for the SOS rank of high-dimensional biquadratic tensors.

\section*{Acknowledgments}This study was supported by  the National Natural Science Foundation of P.R. China (Grant No.12171064),  Chongqing Postgraduate Research and Innovation Project (Grant No. CYB25249 and CYB260252).

\section*{Declarations}

The authors declare no conflict of interest.

\end{document}